\def\XXint#1#2#3{{\setbox0=\hbox{$#1{#2#3}{\int}$ }
\vcenter{\hbox{$#2#3$ }}\kern-.6\wd0}}
\newcommand{\R}{\mathbb{R}} 
 \newcommand{\C}{\mathbb{C}}
\theoremstyle{plain} \newtheorem{theorem}{Theorem}[section]
\newtheorem{lemma}[theorem]{Lemma}
\theoremstyle{definition} \newtheorem{definition}[theorem]{Definition}
\theoremstyle{remark} \newtheorem{remark}{Remark}
\begin{document}

\title[Uniqueness of nonnegative weak solution to $u^p\le(-\Delta)^\frac{\alpha}{2}u$ on $\mathbb R^N$]
{Uniqueness of nonnegative weak solution to $u^p\le(-\Delta)^\frac{\alpha}{2}u$ on $\mathbb R^N$}

\author{Yuzhao Wang}
\address{Department of Mathematics and Physics, North China Electric Power University, Beijing 102206, China}
\curraddr{Department of Mathematics and Statistics, Memorial
University, St. John's, NL A1C 5S7, Canada}
\email{wangyuzhao2008@gmail.com}
%\thanks{}
\author{Jie Xiao}
\address{Department of Mathematics and Statistics, Memorial University, St. John's, NL A1C 5S7, Canada}
%\curraddr{}
\email{jxiao@mun.ca}
%\thanks{}

\thanks{YW was supported by NSFC No. 1120143 and AARMS Postdoctoral Fellowship; JX was supported by NSERC of Canada (FOAPAL \# 202979463102000) and URP of Memorial University
(FOAPAL \# 208227463102000)}

%    General info
\subjclass[2010]{35J61, 35J70; 58J05}

\begin{abstract} 
This note shows that under $(p,\alpha, N)\in (1,\infty)\times(0,2)\times\mathbb Z_+$ the fractional order differential inequality
$$
(\dagger)\quad u^p \le (-\Delta)^{\frac{\alpha}{2}} u\quad\hbox{in}\quad\mathbb R^{N}
$$
has the property that if $N\le\alpha$ then a nonnegative solution to $(\dagger)$ is unique, and if $N>\alpha$ then the uniqueness of a nonnegative weak solution to $(\dagger)$ occurs when and only when $p\le N/(N-\alpha)$, thereby innovatively generalizing Gidas-Spruck's result for $u^p+\Delta u\le 0$ in $\R^N$ discovered in \cite{GS}. 
\end{abstract}
\keywords{nonnegative weak solution, fractional Laplacian, uniqueness}
\maketitle
\tableofcontents

\section{Statement of Theorem}\label{s0}
\setcounter{equation}{0}

In their 1981 paper \cite{GS}, B. Gidas and J. Spruck discovers that the Laplacian differential inequality on Euclidean $N$-space $\mathbb R^N$ with $N\in\mathbb Z_+=\{1,2,3,...\}$:
$$
(\ddagger)\quad u^{p}\le -\Delta u\quad\hbox{under}\quad p\in (1,\infty)
$$
obeys the following law: if $N\le 2$ then a nonnegative weak solution to $(\ddagger)$ is unique; and if $N>2$ then the uniqueness of a nonnegative weak solution to $(\ddagger)$ happens when and only when $p\le N/(N-2)$. 

The fact that this result is extendable to more general differential inequalities in $\mathbb R^N$ and even Riemannian manifolds has been observed now by several authors with a variety of arguments; see \cite{CDP, CM, MP1, MP2, MP3, P1, P2, SZ, GS, S1, S2, S3, WX}.  

In this note, we will establish an analogue of Gidas-Spruck's discovery for fractional order Laplacian inequality. To state our main result, let us recall some notations and facts on fractional Laplacian.

For $0<\alpha\le 2$ let $(-\Delta)^{\frac{\alpha}{2}}$ be the $\frac{\alpha}{2}$ (fractional order) Laplacian which is determined via the Fourier transformation $\hat{(\cdot)}$: if $f$ satisfies
\begin{equation}
\label{int-con}
\int_{\R^N}\frac{|f(x)|}{(1+|x|)^{N+\alpha}}\, dx < \infty
\end{equation}
then
\begin{equation}
\label{fra-diff-fo}
\widehat{(-\Delta)^{\frac{\alpha}{2}} f} (\xi) = |\xi|^\alpha \hat f(\xi)=|\xi|^\alpha\int_{\mathbb R^n} e^{-i\xi \cdot x} f(x)\, dx\quad\forall\quad \xi\in\mathbb R^N;
\end{equation}
alternatively, one has 
\begin{equation}
\label{fra-diff}
(-\Delta)^{\frac{\alpha}{2}} f(x) = C_{N,\alpha}\ \hbox{p.v.} \int_{\R^N}\frac{f(x)-f(y)}{|x-y|^{N+\alpha}}\, dy\quad\forall\quad x\in\mathbb R^N
\end{equation}
with $ C_{N,\alpha}$ being a normalization constant. The fractional Laplace operators arise in stochastic theory as the operators associated with symmetric $\alpha$-stable Levy processes. Applications also can be found in financial mathematics, such as the pricing model for American options, we refer the reader to \cite{BL,CT} and reference therein.

\begin{theorem}
\label{t1}
For $(p,\alpha,N)\in (1,\infty)\times(0,2)\times\mathbb Z_+$, the fractional differential inequality
\begin{equation}\label{eq:2} 
u^p\le(-\Delta)^\frac{ \alpha}{2} u\quad \hbox{in}\quad\mathbb R^N,
\end{equation}
 enjoys the following property:

{\rm (i)} If $N\le\alpha$, then a nonnegative weak solution to \eqref{eq:2} is unique;

{\rm (ii)} If $N>\alpha$, then the uniqueness of a nonnegative weak solution of \eqref{eq:2} occurs when and only when $p\le N/(N-\alpha)$.
\end{theorem}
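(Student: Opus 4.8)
\section*{Proof proposal}

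The plan is to split the argument according to the trichotomy in the statement and to reduce everything to a single scaling/testing mechanism against cutoff functions, exactly as in the Gidas--Spruck approach for $u^p\le-\Delta u$, but with the nonlocal operator $(-\Delta)^{\alpha/2}$ replacing $-\Delta$. The weak formulation to use is: a nonnegative $u$ satisfying the integrability condition \eqref{int-con} is a weak solution if $\int_{\R^N} u^p\,\varphi\,dx\le\int_{\R^N} u\,(-\Delta)^{\alpha/2}\varphi\,dx$ for every nonnegative $\varphi\in C_c^\infty(\R^N)$ (the nonlocal operator is moved onto the test function, which is licit by self-adjointness of $(-\Delta)^{\alpha/2}$ on the Schwartz class). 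Throughout I would fix a smooth radial cutoff $\psi$ with $\psi\equiv1$ on $B_1$, $\supp\psi\subset B_2$, $0\le\psi\le1$, and set $\psi_R(x)=\psi(x/R)$; the only nonlocal input needed is the decay estimate $\lvert(-\Delta)^{\alpha/2}(\psi_R^m)(x)\rvert\ls R^{-\alpha}\,(1+\lvert x\rvert/R)^{-N-\alpha}$ for a suitable power $m=m(p)>1$, which follows from \eqref{fra-diff} together with the homogeneity $(-\Delta)^{\alpha/2}(\psi^m)(\cdot/R)=R^{-\alpha}\big((-\Delta)^{\alpha/2}(\psi^m)\big)(\cdot/R)$.

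For the positive direction in (i) and in (ii) (i.e.\ $N\le\alpha$, or $N>\alpha$ with $p\le N/(N-\alpha)$), I would test the inequality against $\varphi=\psi_R^m$ and apply Young/H\"older: writing $u^p\psi_R^m = (u^p\psi_R^m)^{1/p'}\cdot(u^p\psi_R^m)^{1/p}$ with $1/p+1/p'=1$ and absorbing, one gets
\begin{equation*}
\int_{\R^N} u^p\,\psi_R^m\,dx \ls \int_{\R^N}\big\lvert(-\Delta)^{\alpha/2}(\psi_R^m)\big\rvert^{p'}\,\psi_R^{m-mp'}\,dx \ls R^{N-\alpha p'},
\end{equation*}
using the decay bound above and $\int(1+\lvert x\rvert/R)^{-(N+\alpha)p'}dx\approx R^N$ (here $m$ is chosen large enough that $\psi_R^{m-mp'}$ stays bounded where $(-\Delta)^{\alpha/2}\psi_R^m$ is supported, the standard Mitidieri--Pohozaev device). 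The exponent $N-\alpha p'$ is $\le0$ precisely when $p\le N/(N-\alpha)$ (and is automatically negative, with the whole right side in fact tending to $0$, when $N<\alpha$; the boundary cases $N=\alpha$ and $p=N/(N-\alpha)$ need the refinement that the integral on the right is actually over the annulus $R\le\lvert x\rvert\le 2R$, giving a bound $o(1)$ or a slowly-varying quantity that still forces the conclusion). Letting $R\to\infty$ yields $\int_{\R^N}u^p\,dx=0$, hence $u\equiv0$, which is the asserted uniqueness (the unique nonnegative weak solution being the trivial one).

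For the negative direction in (ii), i.e.\ constructing a nontrivial nonnegative weak solution when $p>N/(N-\alpha)$, I would look for an explicit radial power-type solution: try $u(x)=c\,(1+\lvert x\rvert^2)^{-\beta/2}$ or, as a cleaner model, $u(x)=c\,\lvert x\rvert^{-\beta}$ away from the origin with $\beta=\alpha/(p-1)$, which is the scale-invariant exponent making $u^p$ and $(-\Delta)^{\alpha/2}u$ homogeneous of the same degree $-\alpha p/(p-1)$. The key computational fact is the Riesz-type identity $(-\Delta)^{\alpha/2}\lvert x\rvert^{-\beta}=\lambda(\beta)\,\lvert x\rvert^{-\beta-\alpha}$ with $\lambda(\beta)=2^{\alpha}\,\Gamma\!\big(\tfrac{\beta+\alpha}{2}\big)\Gamma\!\big(\tfrac{N-\beta}{2}\big)\big/\big(\Gamma\!\big(\tfrac{\beta}{2}\big)\Gamma\!\big(\tfrac{N-\beta-\alpha}{2}\big)\big)$, valid and positive exactly when $0<\beta<N-\alpha$; the condition $\beta=\alpha/(p-1)<N-\alpha$ is equivalent to $p>N/(N-\alpha)$, so in that regime one can choose the constant $c$ so that $c^{p-1}=\lambda(\beta)$ and obtain $u^p=(-\Delta)^{\alpha/2}u$ pointwise on $\R^N\setminus\{0\}$. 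The remaining point is to upgrade this to a bona fide nonnegative weak solution on all of $\R^N$ (in particular verifying \eqref{int-con}, which holds since $\beta<N$, and checking that the distributional identity has no bad singular contribution at the origin because $\beta<N-\alpha$ keeps $(-\Delta)^{\alpha/2}u$ locally integrable), possibly after regularizing to $c(\epsilon+\lvert x\rvert^2)^{-\beta/2}$ and noting the correction only improves the inequality in the right direction.

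The main obstacle I anticipate is the nonlocal cutoff estimate and the treatment of the two boundary cases. Unlike the local Laplacian, $(-\Delta)^{\alpha/2}(\psi_R^m)$ is \emph{not} supported in the annulus $\{R\le\lvert x\rvert\le2R\}$ — it has a global tail of size $R^{\alpha}\lvert x\rvert^{-N-\alpha}$ — so one cannot naively restrict the right-hand integral to the annulus; the fix is to carry the tail through the H\"older step using the genuine decay rate $(1+\lvert x\rvert/R)^{-N-\alpha}$ rather than compact support, which is why the exponent $p'$ enters as $(N+\alpha)p'$ and still produces a convergent integral comparable to $R^N$. Getting the sharp threshold $p=N/(N-\alpha)$ to fall on the correct (uniqueness) side then requires the more careful $R\to\infty$ analysis — e.g.\ iterating the estimate, or a logarithmic refinement, or exploiting that on the tail region one gains an extra factor that is $o(1)$ — rather than the crude power-counting, which only gives the strict inequality $p<N/(N-\alpha)$. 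This is the step where I expect to have to be most careful and where the nonlocal nature genuinely complicates the classical Gidas--Spruck argument.
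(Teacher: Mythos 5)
Your route is genuinely different from the paper's: the paper never tests the nonlocal operator directly, but passes to the Caffarelli--Silvestre extension $\omega$ on $\R^{N+1}_+$ (Lemma \ref{int-iq}), proves a weighted $L^{p(N+2-\alpha)/N}$ bound for $\omega$ (Lemma \ref{es-ext}) and a mixed trace inequality (Lemma \ref{int-iqq}), and then tests with the logarithmic cutoffs $\varphi(X)=\min(1,|X/R|^{-t})$, $t=1/\ln R$. That machinery is doing real work at exactly the two places where your proposal has gaps.

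First, the H\"older/absorption step in the uniqueness direction fails as written. After
$\int u\,|(-\Delta)^{\alpha/2}\psi_R^m|\le\bigl(\int u^p\psi_R^m\bigr)^{1/p}\bigl(\int\psi_R^{m-mp'}|(-\Delta)^{\alpha/2}\psi_R^m|^{p'}\bigr)^{1/p'}$,
the weight $\psi_R^{m-mp'}=\psi_R^{-m(p'-1)}$ is infinite wherever $\psi_R=0$, and $(-\Delta)^{\alpha/2}\psi_R^m$ does \emph{not} vanish there; no choice of $m$ helps, because unlike $|\Delta(\psi^m)|\ls\psi^{m-2}$ there is no bound $|(-\Delta)^{\alpha/2}(\psi^m)|\ls\psi^{\theta}$ with $\theta>0$ off the support. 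Your stated fix (use the tail decay $(1+|x|/R)^{-N-\alpha}$) computes $\int|(-\Delta)^{\alpha/2}\psi_R^m|^{p'}$ \emph{without} the weight, but then you cannot absorb the $u$-factor without already knowing $u\in L^p$. This is repairable -- e.g.\ note that $(-\Delta)^{\alpha/2}(\psi_R^m)\le0$ off $\supp\psi_R$ so that region can be discarded for $u\ge0$, or better, use strictly positive test functions $(1+|x/R|^2)^{-\sigma}$ for which $|(-\Delta)^{\alpha/2}\varphi_R|\ls R^{-\alpha}\varphi_R$ -- but the repair must be supplied. Second, you concede the critical case $p=N/(N-\alpha)$, and the local annulus refinement genuinely does not transfer: $|(-\Delta)^{\alpha/2}\psi_R^m|\approx R^{-\alpha}$ on all of $B_{R/2}$, so $\int_{B_{R/2}}|(-\Delta)^{\alpha/2}\psi_R^m|^{p'}\approx R^{N-\alpha p'}=O(1)$ at criticality and is not concentrated on a set escaping to infinity; a second pass only reproduces $u\in L^p$, not $u=0$. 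The paper kills the critical case precisely through the factor $t^{a}\to0$ produced by the $t=1/\ln R$ cutoff; you need a substitute (for instance $u\ge I_\alpha(u^p)$, hence $u(x)\gs|x|^{\alpha-N}$, contradicting $u\in L^{N/(N-\alpha)}$).

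Your supercritical construction is correct and in fact cleaner than the paper's: $u=c|x|^{-\beta}$ with $\beta=\alpha/(p-1)$, $c^{p-1}=\lambda(\beta)$, solves the equation exactly, and $0<\beta<N-\alpha$ is equivalent to $p>N/(N-\alpha)$; the paper instead mollifies $|x|^{-(N-\alpha-\delta)}$, verifies the inequality asymptotically, and rescales by a constant to handle the compact part. One caveat: by homogeneity your $u$ has infinite $\dot H^{\alpha/2}$ energy, so it is not a weak solution in the sense of Definition \ref{d2} (which requires the extension to have finite weighted Dirichlet integral), whereas the paper's $u_\delta$ does satisfy that condition for small $\delta>0$. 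If the uniqueness statement is read within that class, your example needs a decay correction at infinity, not merely a regularization at the origin.
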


\begin{remark}
The notion of a nonnegative weak solution to \eqref{eq:2} is deferred to Definition \ref{d2}.
\end{remark}

Notice that the case $\alpha=2$ of Theorem \ref{t1} agrees with Gidas-Spruck's uniqueness result stated above. To prove Theorem \ref{t1}, we utilize Section 2 to introduce three new results of independent interest: the first is Lemma \ref{int-iq} which, as a local version of (\ref{eq:2}), is established by Caffarelli-Silvestre's $\alpha$-extension in \cite{CS}; the second is Lemma 
\ref{es-ext} that lifts $L^p(\mathbb R^n)$ to a weighted function space in the upper-half-space via the $\alpha$-extension; and the third is Lemma \ref{int-iqq} which exists as a mixed trace inequality (inspired more or less by \cite{X}) for the $\alpha$-extension of a nonnegative weak solution to (\ref{eq:2}). With the help of those three lemmas, Section 3 is designed to validate Theorem \ref{t1} - in particular - Theorem \ref{t1}(i) and the ``when" part of Theorem \ref{t1}(ii) are demonstrated by showing ``$u\ge 0\Rightarrow u=0$", and the ``only when" part of Theorem \ref{t1}(ii) is checked through constructing a positive solution based on the fundamental solution to $(-\Delta)^{\frac{\alpha}{2}} u = 0$ in $\mathbb R^N$.

\smallskip

\noindent{\it Notation.} In what follows, $\mathsf{U}\lesssim\mathsf{V}$ means that there is a constant $C > 0$ such that $\mathsf{U}\le C\mathsf{V}$. Moreover, $\mathsf{U} \approx \mathsf{V}$ stands for $\mathsf{U}\lesssim\mathsf{V}\lesssim\mathsf{U}$.

\section{Lemmas and Their Proofs}\label{s2}

\subsection{A reduction for $u^p\le(-\Delta)^\frac{\alpha}{2}u$}\label{s21} Such a reduction comes from another look at Caffarelli-Silvestre's extension technique (cf. \cite{CS}) that reduces the fractional Laplacian to a local problem through bringing one more variable into play. To be more precise, the fractional Laplacian can be characterized as a Dirichlet-Neumann operator for an appropriate differential equation of divergence form: if $\alpha\in (0,2)$, $u=u(x)$ is a function defined in $\R^N$, and $\omega=\omega(x,y)$ is a solution to the boundary value problem
\begin{equation}
\label{fra-ext}
\begin{cases}
-div(y^{1-\alpha} \nabla \omega)  =0\quad\text{in}\quad \R^{N+1}_+=\R^N\times(0,\infty);\\
\omega(x,0) = u(x)\quad\forall\quad x\in\R^N,
\end{cases}
\end{equation}
then there is a constant $C_\alpha$ depending on $\alpha$ such that
\begin{equation}
\label{fra-diff-loc}
\lim_{y\rightarrow 0^+} y^{1-\alpha} \frac{\partial \omega}{\partial y}(x,y) = - C_\alpha (-\Delta)^{\frac{\alpha}{2}} u(x)\quad\forall\quad x\in\R^N.
\end{equation}
Observe the divergence form in \eqref{fra-ext} is not only the Euler-Lagrange equation for the functional
\[
J(u) = \iint_{\R^{N+1}_+} |\nabla\omega|^2 y^{1-\alpha}\, dx dy
\]
with $\nabla$ being the gradient in $(x,y)$, but also it can be rewritten as
\begin{equation}
\label{fra-har}
\Delta_x \omega + {(1-\alpha)}{y}^{-1} \omega_y + \omega_{yy} = 0,
\end{equation}
which may be viewed as a harmonic function in $N+2-\alpha$ dimensions. This observation leads us to the fundamental solution to \eqref{fra-har}:
\begin{equation*}
\label{funda}
\Gamma(x,y) =\frac{\pi^{(N+2-\alpha)/2}\Gamma(\big(N-\alpha)/4\big)}
{\big(|x|^2 + |y|^2\big)^{\frac{N-\alpha}{2}}}\quad\hbox{under}\quad N-\alpha>1,
\end{equation*}
where $\Gamma(\cdot)$ is the standard gamma function. Furthermore, upon setting 
\[
P(x,y) = -y^{-1+\alpha} \partial_y \Gamma(x,y) = \left(\frac{\Gamma(\big(N+\alpha)/2\big)}{\pi^{N/2}\Gamma(\alpha/2)}\right) \frac{y^\alpha}{\big(|x|^2 + |y|^2\big)^{\frac{N+\alpha}{2}}} 
\]
be the Poisson kernel, we find that 
\begin{equation}
\label{exten}
\omega(x,y) = \int_{\R^N} P(x-z, y)u(z)\, dz,
\end{equation}
solves \eqref{fra-ext}. Due to \eqref{exten}, the solution $\omega(x,y)$ of \eqref{fra-ext} is also called the {$\alpha$-extension} of $u(x)$. 

One major advantage with the use of representing the fractional Laplacian as a divergence-form differential equation (\ref{fra-ext}-\ref{fra-diff-loc}) instead of a nonlocal operator \eqref{fra-diff-fo} or \eqref{fra-diff} is that one can localize the related problems. This method  has a great deal of applications in equations involving the fractional Laplacian; see \cite{CSS,CT,BCPS} and their references. Our current concern is the inequality \eqref{eq:2} (but not its equality) that is also nicely connected to the extension method in \cite{CS}. After extending the inequality to $\R^{N+1}_+$, it suffices to deal with a local equation \eqref{fra-har} by using an inequality as the boundary condition. In view of \eqref{fra-ext} and \eqref{fra-diff-loc}, the inequality \eqref{eq:2} can be rewritten as
\begin{equation}\label{in:loc}
\begin{cases}
div(y^{1-\alpha} \nabla \omega) = 0\quad\forall\quad (x,y)\in\R^{N+1}_+;\\
\displaystyle \lim_{y\rightarrow 0^+} y^{1-\alpha} \frac{\partial \omega}{\partial y}(x,y) + u^p(x) \le 0\quad\forall\quad x\in\R^N.
\end{cases}
\end{equation}
Naturally, $\omega$ can be extended to $\R^{N+1}$ via putting
\[
\tilde \omega(x,y) = 
\begin{cases}
\omega(x,y), \quad &\ \ \forall\ \ x\in\R^N\ \ \&\ \ y\ge 0;\\
\omega(x,-y), \quad & \ \ \forall\ \ x\in\R^N\ \ \&\ \ y< 0.
\end{cases}
\]

\begin{lemma}\label{int-iq} Given $(p,\alpha)\in (1,\infty)\times(0,2)$. Suppose that $u$ is a nonnegative  solution to \eqref{eq:2}, $\omega$ is its $\alpha$-extension, and $\nabla$ stands for the full gradient operator in $(x,y)\in\R^{N+1}$. If $|y|^{1-\alpha}|\nabla \tilde \omega(x,y)|\in L^2(\R^{N+1})$, then for any nonnegative continuous function $h$  satisfying $|y|^{1-\alpha} |\nabla h(x,y)|\in L^2(\R^{N+1})$, one has
\begin{equation}\label{int-iq0}
2\int_{\R^N} u^p(x) h(x,0)\,dx \le \iint_{\R^{N+1}} \nabla \tilde \omega(x,y) \cdot \nabla h(x,y) |y|^{1-\alpha}\, dx dy.
\end{equation}
\end{lemma}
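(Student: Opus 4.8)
The plan is to derive the trace inequality \eqref{int-iq0} by integrating the extension equation against the test function $h$ on the upper half-space and then using the boundary inequality in \eqref{in:loc}. First I would fix a nonnegative continuous $h$ with $|y|^{1-\alpha}|\nabla h|\in L^2(\R^{N+1})$ and, working for the moment formally on $\R^{N+1}_+$, multiply the equation $\mathrm{div}(y^{1-\alpha}\nabla\omega)=0$ by $h$ and integrate over $\R^N\times(\eps,\infty)$. An integration by parts (Green's identity for the weighted operator) gives
\begin{equation*}
\iint_{\R^N\times(\eps,\infty)} \nabla\omega\cdot\nabla h\, y^{1-\alpha}\,dx\,dy = \int_{\R^N} \eps^{1-\alpha}\,\partial_y\omega(x,\eps)\,h(x,\eps)\,dx,
\end{equation*}
where the sign comes from the outward normal at $y=\eps$ pointing in the $-y$ direction and there is no contribution from $y=\infty$ because of the $L^2$ weighted-gradient hypotheses (so the fluxes vanish in the limit). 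Letting $\eps\to 0^+$, the left side converges to $\iint_{\R^{N+1}_+}\nabla\omega\cdot\nabla h\, y^{1-\alpha}$ by the integrability assumption on $|y|^{1-\alpha}|\nabla\tilde\omega|$ and on $|y|^{1-\alpha}|\nabla h|$ via dominated convergence, while the right side converges — using \eqref{fra-diff-loc} together with the boundary inequality in \eqref{in:loc}, namely $\lim_{y\to0^+}y^{1-\alpha}\partial_y\omega(x,y)\le -u^p(x)$ — to a quantity bounded above by $-\int_{\R^N}u^p(x)h(x,0)\,dx$. This yields
\begin{equation*}
\int_{\R^N} u^p(x)\,h(x,0)\,dx \le \iint_{\R^{N+1}_+} \nabla\omega(x,y)\cdot\nabla h(x,y)\, y^{1-\alpha}\,dx\,dy.
\end{equation*}

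To pass from the half-space inequality to the full-space inequality \eqref{int-iq0}, I would invoke the even reflection $\tilde\omega$ defined just before the lemma. Since $\tilde\omega(x,y)=\omega(x,|y|)$, one has $\nabla\tilde\omega(x,y)=(\nabla_x\omega(x,|y|),\,\mathrm{sign}(y)\,\partial_y\omega(x,|y|))$, and because $|y|^{1-\alpha}$ is also even in $y$, the integrand $\nabla\tilde\omega\cdot\nabla h\,|y|^{1-\alpha}$ has the property that its integral over $\R^N\times(-\infty,0)$ equals its integral over $\R^N\times(0,\infty)$ provided $h$ is, after the same reflection convention, even in $y$ — or, more robustly, one simply notes that the right-hand side of \eqref{int-iq0} as written with $\tilde\omega$ and a general continuous $h$ dominates twice the half-space integral appearing above once we restrict attention to the symmetric part of $h$, which is the only part that pairs nontrivially against the even function $\tilde\omega$. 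Replacing $h$ by its symmetrization changes neither $h(x,0)$ nor the pairing, so
\begin{equation*}
\iint_{\R^{N+1}}\nabla\tilde\omega\cdot\nabla h\,|y|^{1-\alpha}\,dx\,dy = 2\iint_{\R^{N+1}_+}\nabla\omega\cdot\nabla h\,y^{1-\alpha}\,dx\,dy,
\end{equation*}
and combining this with the displayed half-space bound gives precisely \eqref{int-iq0}.

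The routine algebra is the two integrations by parts and the reflection bookkeeping; the genuinely delicate point is the justification of the boundary-term limit $\lim_{\eps\to0^+}\int_{\R^N}\eps^{1-\alpha}\partial_y\omega(x,\eps)h(x,\eps)\,dx \le -\int_{\R^N}u^p h(x,0)\,dx$. Here one must upgrade the pointwise relation \eqref{fra-diff-loc}–\eqref{in:loc} to a statement valid after integration against $h(x,\eps)$, controlling the approach $h(x,\eps)\to h(x,0)$ (continuity of $h$) simultaneously with the convergence of the weighted normal derivative; the weighted $L^2$ gradient hypothesis on $\tilde\omega$ is exactly what is needed to make the flux at $y=\eps$ well-behaved and to kill the flux at infinity, so the main obstacle is packaging these convergences into a clean dominated-convergence argument near the boundary rather than any conceptual difficulty. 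I expect this is where the proof will spend most of its technical effort.
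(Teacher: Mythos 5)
Your proof is essentially the paper's: both integrate $\mathrm{div}(|y|^{1-\alpha}\nabla\omega)\,h$ by parts on the region $\{|y|\ge \eps\}$ (you on the half-space $\R^N\times(\eps,\infty)$, the paper on $B_R$ with the reflected $\tilde\omega$, which is where its factor $2$ comes from, versus your symmetrization of $h$), use the weighted $L^2$ gradient hypotheses to dispose of the near-boundary and far-field contributions, and feed the Neumann inequality from \eqref{in:loc} into the flux term at $y=\eps$. The only blemish is a sign slip: with outward normal $-e_y$ at $y=\eps$ the boundary term is $-\int_{\R^N}\eps^{1-\alpha}\,\partial_y\omega(x,\eps)\,h(x,\eps)\,dx$, and it is this corrected sign (the flux then being bounded \emph{below} by $\int_{\R^N}u^p(x)h(x,0)\,dx$ in the limit) that yields the half-space inequality you state; taken literally, your displayed identity together with the claimed upper bound by $-\int_{\R^N}u^p h(x,0)\,dx$ would produce the reversed inequality, so fix the sign but keep the rest unchanged.
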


\begin{proof}
Without losing of generality, we may assume that $h$ supports in the origin-centered Euclidean ball $B_R := B(0,R)$ in $\R^{N+1}$ with radius $R>0$. For any $\varepsilon > 0$, we utilize  $div(|y|^{1-\alpha} \nabla \omega) = 0$ and the Stokes formula to obtain
\begin{align*}
\iint_{B_R} \nabla & \tilde \omega(x,y) \cdot \nabla h(x,y) |y|^{1-\alpha} dx dy\\
 &=  \iint_{B_R\backslash \{ |y| < \varepsilon\}} + \iint_{B_R\cap \{ |y| < \varepsilon\}} \nabla \tilde \omega(x,y) \cdot \nabla h(x,y) |y|^{1-\alpha} dx dy \\
&= \iint_{B_R\backslash \{ |y| < \varepsilon\}} div(|y|^{1-\alpha} h \nabla  \tilde  \omega) dx dy + \iint_{B_R\cap \{ |y| < \varepsilon\}} \nabla \tilde \omega(x,y) \cdot \nabla h(x,y) |y|^{1-\alpha} dx dy\\
 &= - \iint_{B_R\cap \{ |y| = \varepsilon\}} \varepsilon^{1-\alpha} h(x,\varepsilon)  \frac{\partial  \tilde  \omega}{\partial y}(x,\varepsilon) dx + \iint_{B_R\cap \{ |y| < \varepsilon\}} \nabla \tilde \omega(x,y) \cdot \nabla h(x,y) |y|^{1-\alpha} dx dy.
\end{align*}
Upon noticing that the last integral goes to $0$ as $\varepsilon \rightarrow 0^+$ because of $|y|^{1-\alpha} |\nabla \omega(x,y)| \in L_{loc}^2(\R^{N+1})$ and that the inequality in \eqref{in:loc} gives
\[
 u^p(x)\le-\lim_{\varepsilon\rightarrow 0^+} \varepsilon^{1-\alpha} \frac{\partial \omega}{\partial y}(x,\varepsilon),
\]
we get (\ref{int-iq0}).
\end{proof}

\begin{remark}\label{r2} As a matter of fact, when $h(x,y)$ is the $\alpha$-extension of $h(x,0)$, the inequality \eqref{int-iq0} is equivalent to
\[
\int_{\R^N} u^p(x) h(x,0) dx \le \int_{\R^N} |\xi|^{\alpha} \hat u (\xi) \overline {\hat h} (\xi,0)  d\xi,
\]
which implies $u \in \dot H^{\alpha/2} \cap L^{p+1}$ immediately.
One way to verify this is to show that the corresponding energy functionals coincide through the energy functional formula in \cite{CS}:
\begin{equation}\label{energy}
\iint_{\R^{N+1}_+} |\nabla\omega(x,y)|^2 y^{1-\alpha}\, dxdy = \int_{\R^N} |\xi|^{\alpha} |\hat u (\xi)|^2\, d\xi,
\end{equation}
where $\omega$ is the $\alpha$-extension of $u$. A similar argument gives 
\[
\iint_{\R^{N+1}_+}  \nabla  \omega(x,y) \cdot \nabla w(x,y)  y^{1-\alpha}\, dxdy = \int_{\R^N} |\xi|^{\alpha} \hat u (\xi) \overline {\hat v} (\xi)  d\xi,
\]
where $v \in \dot H^{\alpha/2}$ and $w$ is its $\alpha$-extension. 
\end{remark}

More importantly, Lemma \ref{int-iq} suggests us to adopt the local version of a nonnegative weak solution to \eqref{eq:2} via the $\alpha$-extension. 
\begin{definition}\label{d2}
We say that $u$ on $\R^N$ is a nonnegative weak solution of \eqref{eq:2} provided that $u$ is a nonnegative function and its extension $\omega$ satisfies both $|y|^{1-\alpha} |\nabla \omega(x,y)|\in L^2(\R^{N+1})$ and 
\begin{equation}\label{weak-sol}
\int_{\R^N} u^p h|_{y=0}\,dx \le \iint_{\R^{N+1}_+} \nabla  \omega \cdot \nabla h \ \ |y|^{1-\alpha}\, dx dy,
\end{equation}
for any compactly supported nonnegative function $h$ satisfying $|y|^{1-\alpha} |\nabla h(x,y)|\in L^2(\R^{N+1})$.
\end{definition}

\subsection{An extension for $L^p(\mathbb R^n)$}\label{s22} This is presented in the following result.

\begin{lemma}
\label{es-ext} For $(p,\alpha)\in(1,\infty)\times(0,2)$ and $u\in L^p(\R^N)$, let 
$\omega(x,y)$ be given by (\ref{exten}). Then
\begin{equation}
\label{es-ext1}
\left(\iint_{\R^{N+1}_+} |\omega(x,y)|^{\frac{(N + 2 -\alpha)p}{N}} y^{1-\alpha}\,dxdy \right)^{\frac{N}{(N + 2 -\alpha)p}}\lesssim\|u\|_{L^{p}(\R^N)}.
\end{equation}
\end{lemma}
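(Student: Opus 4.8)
The plan is to recognize the left-hand side of \eqref{es-ext1} as a weighted Lebesgue norm of the Poisson-type extension and to estimate it by interpolating two endpoint bounds on the extension operator $u\mapsto\omega$, in the spirit of a weighted Sobolev/Stein--Weiss inequality. First I would record that, by \eqref{exten}, $\omega(x,y)=(P(\cdot,y)*u)(x)$ with $\int_{\R^N}P(x,y)\,dx=1$ for every $y>0$; hence Young's inequality immediately gives the $L^\infty$ endpoint $\|\omega(\cdot,y)\|_{L^\infty(\R^N)}\lesssim y^{-N/p}\|u\|_{L^p(\R^N)}$ (using $\|P(\cdot,y)\|_{L^{p'}(\R^N)}\approx y^{-N/p}$, which follows from the explicit form of $P$ by scaling) and the $L^p$ endpoint $\|\omega(\cdot,y)\|_{L^p(\R^N)}\le\|u\|_{L^p(\R^N)}$.

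Next I would fix $y>0$ and interpolate these two bounds: writing $q=\frac{(N+2-\alpha)p}{N}>p$, one has $\|\omega(\cdot,y)\|_{L^q(\R^N)}\le\|\omega(\cdot,y)\|_{L^\infty(\R^N)}^{1-p/q}\|\omega(\cdot,y)\|_{L^p(\R^N)}^{p/q}\lesssim y^{-(N/p)(1-p/q)}\|u\|_{L^p(\R^N)}$. Raising to the power $q$, multiplying by $y^{1-\alpha}$, and integrating in $y$, the $x$-integral contributes $\lesssim y^{-(N/p)(q-p)}\|u\|_{L^p}^q$, so the full double integral is bounded by $\|u\|_{L^p}^q\int_0^\infty y^{1-\alpha-(N/p)(q-p)}\,dy$ — but this single integral diverges at both ends, which is exactly the expected obstacle: a pure interpolation at fixed $y$ loses the decay needed for $y$-integrability. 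The fix is to avoid integrating the crude pointwise-in-$y$ bound and instead run the argument as a genuine weighted convolution estimate in the full variable $(x,y)$: observe that $P(x,y)=c_{N,\alpha}\,y^{\alpha-1}\cdot y^{N}\big(|x|^2+y^2\big)^{-\frac{N+\alpha}{2}}$, so that $y^{(1-\alpha)/q}\omega(x,y)$ is, up to the harmless factor $y^{(1-\alpha)(1/q+1)}$ absorbed by homogeneity, a convolution of $u$ against the $(N+1)$-dimensional kernel $K(x,y):=\big(|x|^2+y^2\big)^{-\frac{N+\alpha}{2}}$ restricted to the half-space, which is homogeneous of degree $-(N+\alpha)$; the target exponent $q$ and weight $y^{1-\alpha}$ are dictated precisely so that this becomes a scaling-invariant Stein--Weiss / Hardy--Littlewood--Sobolev inequality on $\R^{N+1}_+$ with the Muckenhoupt weight $y^{1-\alpha}$ (which lies in $A_q$ for the relevant range of $\alpha\in(0,2)$).

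Concretely, the key steps in order are: (1) rewrite $\iint |\omega|^q y^{1-\alpha}\,dxdy$ after the substitution that makes $d\mu=y^{1-\alpha}\,dxdy$ and compute that the dimensional parameter of $(\R^{N+1}_+,\mu)$ is $N+2-\alpha$, so $q=\frac{(N+2-\alpha)p}{N}$ is the Sobolev conjugate forcing scale invariance; (2) dominate $|\omega(x,y)|\lesssim \big(M_x u(x)\big)^{\theta}\big(\|u\|_{L^p}\big)^{1-\theta}$-type bounds, or more cleanly, use the subordination $\omega(x,y)=\int_0^\infty e^{-t}(\text{heat extension at time }\sim y^2 t)\,\frac{dt}{t}$ together with the fact that the Gaussian heat semigroup maps $L^p(\R^N)\to L^q(\R^N)$ with the sharp time-decay, then integrate the resulting $y$-power against $y^{1-\alpha}\,dy$ using that the combined exponent is exactly $-1$ in a Schur-type sense and Minkowski's integral inequality controls the $t$-integral; (3) conclude \eqref{es-ext1} by Minkowski / Schur on the product space. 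I expect step (2) — making the $y$-integration converge — to be the main obstacle, and the honest resolution is that one must \emph{not} separate variables but treat $\omega$ as a single $(N+1)$-dimensional fractional integral of $u$ and apply the weighted HLS inequality on $\R^{N+1}_+$ with weight $y^{1-\alpha}$; the exponent matching is what the lemma's statement has been engineered to guarantee.
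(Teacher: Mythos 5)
There is a genuine gap. You correctly diagnose that any argument based on the fixed-$y$ bound $\|\omega(\cdot,y)\|_{L^q(\R^N)}\lesssim y^{-N(\frac1p-\frac1q)}\|u\|_{L^p}$ must fail, since the resulting $y$-integral diverges; but your proposed repairs do not actually close this hole. The ``weighted Stein--Weiss/HLS inequality on $\R^{N+1}_+$ with weight $y^{1-\alpha}$'' that you invoke is not an off-the-shelf theorem in the form you need: here the input function lives on the boundary $\R^N$ and the output lives in the half-space with the measure $d\mu=y^{1-\alpha}dxdy$, so the classical Stein--Weiss inequality (same space on both sides, power weights of $|x|$) does not apply, and the $A_q$ membership of $y^{1-\alpha}$ is irrelevant to a boundary-to-half-space extension bound. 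In effect you are citing the lemma to prove the lemma. Your alternative route via subordination to the heat semigroup plus its sharp $L^p\to L^q$ decay only reproduces the same fixed-$y$ estimate, and the claim that a ``Schur-type'' combined exponent $-1$ saves the day is not correct: $\int_0^\infty y^{-1}\,dy$ still diverges at both ends, which is exactly the obstruction you had already identified. Any proof that separates variables and uses only the size of $\|\omega(\cdot,y)\|_{L^q(\R^N)}$ for each $y$ cannot work; one must genuinely exploit that a single $u$ cannot saturate the fixed-$y$ bound at all scales simultaneously, and that requires a distributional or interpolation argument over the product space $(\R^{N+1}_+,\mu)$.

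The concrete way to do this (and what the paper does) is to prove endpoint mapping properties of the extension operator $T:u\mapsto\omega$ into $L^r(\R^{N+1}_+,d\mu)$ and interpolate. At one end, $T$ is of weak type $\bigl(\frac{N+2-\alpha}{N},1\bigr)$: since $P(x,y)\lesssim |X|^{-N}$ with $X=(x,y)$ and $\mu(\{|X|\lesssim r\})\approx r^{N+2-\alpha}$, the kernel itself lies in $L^{r_0,\infty}(d\mu)$ with $r_0=\frac{N+2-\alpha}{N}$, and a weak-type Young/duality argument transfers this to $T$ on $L^1(\R^N)$. At the other ends one has the trivial strong $(\infty,\infty)$ bound (from $\|P(\cdot,y)\|_{L^1(\R^N)}=1$) and a strong $\bigl(2,\frac{2N}{N+2-\alpha}\bigr)$ bound obtained from the exact identity $\hat\omega(\xi,y)=\hat u(\xi)\phi(|\xi|y)$, which gives $\iint_{\R^{N+1}_+}|\omega|^2\,d\mu\lesssim\|u\|_{\dot H^{\frac{\alpha}{2}-1}}^2\lesssim\|u\|_{L^{2N/(N+2-\alpha)}}^2$ by the Sobolev embedding. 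Marcinkiewicz and Riesz--Thorin interpolation between these endpoints then yield \eqref{es-ext1} for all $1<p\le\infty$. This is the precise instantiation of your (correct) instinct to ``treat $\omega$ as a single $(N+1)$-dimensional object''; without proving such endpoint estimates, your proposal leaves the key inequality unproved.
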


\begin{proof} The argument is split into two parts.

\smallskip

{\it Part 1:\ $2N/(N+2-\alpha)\le p\le\infty$}. This follows from the well-known complex interpolation theorem. So it suffices to prove the endpoint cases, which are $(\infty, \infty)$-estimate and $(2, \frac{2N}{N+2-\alpha})$-estimate. 
	
Let us first consider the $(\infty,\infty)$-estimate. According to \eqref{exten}, one has 
\[
\omega(x,y)=\left(\frac{\Gamma(\big(N+\alpha)/2\big)}{\pi^{N/2}\Gamma(\alpha/2)}\right) 
\int_{\R^N}\frac{y^{\alpha}u(\xi)}{(|x-\xi|^2 + y^2)^{\frac{N+\alpha}2}}\,d\xi.
\]
It is easy to get
\[
\int_{\R^N} \frac{y^\alpha}{(|x|^2 + y^2)^{\frac{N+\alpha}2}}\, dx=\int_{\R^N} \frac{1}{(1 + |x|^2 )^{\frac{N+\alpha}2}}\, dx < \infty,
\]
and then by Young's inequality
\[
\|\omega(x,y)\|_{L^\infty_{x,y}(\R^{N+1})} \lesssim \|u(x)\|_{L^\infty(\R^N)}.
\]
This in turn implies
\begin{equation}
\label{Linfty}
\|\omega(x,y)\|_{L^\infty_{x,y}(\R^{N+1}_+,y^{1-\alpha}dxdy)} \lesssim \| u(x)\|_{L^\infty_x}.
\end{equation}

Next, we make the $L^p$-estimate with $p = \frac{2N}{N+2-\alpha}$. After taking the Fourier transform, \eqref{fra-har} becomes
\[
-|\xi|^2 \hat \omega (\xi,y) + ({1-\alpha})^{-1}y\hat\omega_y(\xi, y) + \hat\omega_{yy}(\xi,y) = 0,
\]
which is an ordinary differential equation for each $\xi$. Suppose now that $\phi: [0,\infty) \rightarrow \R $ is the minimizer of the functional
\[
J(\phi) : = \int_{0}^\infty (|\phi|^2+|\phi'|^2) y^{1-\alpha}\, dy\quad\hbox{subject\ to}\quad \phi(0)=1.
\]
Then $\phi$ solves the following equation (cf. \cite{CS}),
$$
\begin{cases}
- \hat \phi (y) + \frac{1-\alpha}{y} \hat \phi_y (y) + \phi_{yy} (y) =  0;\\
\phi(0) =  1;\\
\lim_{y\rightarrow\infty } \phi(y) =  0.
\end{cases}
$$
Note that
\[
\hat \omega (\xi,y) = \hat u (\xi) \phi(|\xi|y).
\]
Thus the energy functional of $\omega$ is
\begin{equation}\label{L2}
\begin{split}
\iint_{\R^{N+1}_+} |\omega(x,y)|^2 y^{1-\alpha}\, dxdy = & \int_{\R^N} \int_0^\infty |\hat \omega(\xi,y)|^2 y^{1-\alpha}\, dy d\xi\\
= & \int_{\R^N} \int_0^\infty |\hat u (\xi)|^2 |\phi(|\xi|y)|^2 y^{1-\alpha}\, dy d\xi \\
= & \int_{\R^N}  |\xi|^{\alpha -2} |\hat u (\xi)|^2 \int_0^\infty|\phi(y)|^2 y^{1-\alpha}\, dy d\xi\\
\le & J(\phi)  \|u\|^2_{\dot H^{\frac{\alpha}{2}-1}}\\ 
\le & J(\phi)  \|u\|^2_{L^\frac{2N}{N+2-\alpha}},
\end{split}
\end{equation}
where we have used the fractional Sobolev imbedding for $\frac{\alpha}{2}-1 < 0$ in the last step. 

The desired inequality follows from putting the above two estimates together and using \cite[Theorem 1.3.4]{Gra} with 
$$
(X,d\mu) = (\R^{N+1}, y^{1-\alpha}dxdy);\ \ (Y,dm) = (\R^N, dx);\ \ (q_0,p_0,q_1,p_1) = \left(2, \frac{2N}{N+2-\alpha} ,\infty, \infty\right)
$$
and $T: u\mapsto\omega$ being the extension operator.

\smallskip

{\it Part 2:\ $1<p<2N/(N+2-\alpha)$}. This follows from the well-known real interpolation theorem. So it suffices to check the weak-type estimate at the endpoint $p=1$.

Given a space $X$ and a measurable function $f: X\rightarrow \C$, we define the distribution function $\lambda_f: \R_+ \rightarrow [0,+\infty]$ by the formula
\[
\lambda_f(t) = \mu (\{x\in X: |f(x)| \ge t\}).
\]
Recall the weak $L^p$-norm $\|f\|_{L^{p,\infty}(X)}$ by
\[
\|f\|^p_{L^{p,\infty}(X,d\mu)} : = \|t^p\lambda_f(t)\|_{L^\infty(\R_+)}.
\]
We say that $T$ is of strong-type $(p,q)$ if 
\[
\|Tf\|_{L^q(X,d\mu)} \lesssim\|f\|_{L^p(Y,dm)},
\] 
and of weak-type $(p,q)$ if the above inequality holds whenever $L^q$ is replaced by $L^{q,\infty}$.

Suppose now $d\mu= y^{1-\alpha} dxdy$. Then the weak $L^{r,\infty}$-norm has an equivalent counterpart
\begin{equation}\label{weakp}
\|f\|_{L^{r,\infty}} \approx \sup_{E} \mu(E)^{-\frac{1}{r'}} \left|\int_E f \, d\mu \right|,
\end{equation}
where the supremum is taken over $\mu$-measurable set $E\subset\R^{N+1}_+$. If $r_0 = \frac{N + 2 -\alpha}{N}$, then for $Tu=\omega$ we have 
\begin{equation*}
\begin{split}
\|Tu\|_{L^{r_0,\infty}} \approx & \sup_{E} \mu(E)^{-\frac{1}{r_0'}} \left|\int_E Tu \, d\mu \right|.
\end{split}
\end{equation*}
For the right side of the last equivalence, we use Fubini's theorem and \eqref{weakp} to achieve
\begin{equation*}
\begin{split} \mu(E)^{-\frac{1}{r_0'}} \left|\int_E Tu \, d\mu \right| \le &  \mu(E)^{-\frac{1}{r_0'}} \int_E \int_{\R^N} P(x-z,y)|u(z)|\,dz \, d\mu \\
 = & \left(\sup_{\tilde z} \mu(E-\tilde{z} )^{-\frac{1}{r_0'}} \int_{E-\tilde{z}} P(x,y) \, d\mu \right)  \|u\|_{L^1} \\ 
\lesssim & \|P\|_{L^{r_0,\infty}} \,  \|u\|_{L^1},
\end{split}
\end{equation*}
where $\tilde{z} = (z,0) \in \R^{N+1}_+$.

Now for the weak $(r_0,1)$-estimate of $T$, it is sufficient to bound 
$$
\|P\|_{L^{r_0,\infty}(X,d\mu)}\quad\hbox{for}\quad P(x,y)=\left(\frac{\Gamma(\big(N+\alpha)/2\big)}{\pi^{N/2}\Gamma(\alpha/2)}\right)
\frac{y^{-N}}{(|x/y|^2+1)^\frac{N+\alpha}{2}},
$$ 
namely,
\begin{equation}
\label{P}
\|P\|^{r_0}_{L^{r_0,\infty}(X,d\mu)} =  \sup_{\lambda>0} \lambda^{r_0} \mu(\{(x,y)\in\R^{N+1}_+:\ P(x,y) > \lambda\}). 
\end{equation}
Noticing
$$
P(x,y) =\left(\frac{\Gamma(\big(N+\alpha)/2\big)}{\pi^{N/2}\Gamma(\alpha/2)}\right)\frac{y^{\alpha}}{\big(|x|^2 + y^2 \big)^{ \frac{N+\alpha}{2}}} \lesssim {|X|^{-N}}
\quad\hbox{where}\quad X = (x,y),
$$
we find that the right side of \eqref{P} can be controlled by 
\begin{equation*}
\begin{split} 
\mu(\{(x,y)\in\R^{N+1}_+:\ P(x,y) > \lambda\}) &\lesssim  \mu \left(\{X \in \R^{N+1}_+:\ |X|^{-N} \gtrsim \lambda\}\right)\\
&\approx \int_{|X| \lesssim \lambda^{-\frac1N}} \,d\mu \\
&\lesssim  \left(\int_{|x|\lesssim \lambda^{-\frac1N}} \,dx\right)\left(\int_{y \lesssim \lambda^{-\frac1N}} y^{1-\alpha}\,dy\right) \\
&\approx  \lambda^{r_0},
\end{split}
\end{equation*}
thereby getting 
\[
\|P\|_{L^{r_0,\infty}(X,d\mu)} \lesssim 1,
\]
which implies that $T$ is of the weak $(r_0,1)$-estimate. Therefore the strong-type estimate desired in {\it Part 2} follows from the strong $(2, \frac{2N}{N+2-\alpha})$-estimate proved in {\it Part 1} and Marcienkiewicz's real interpolation method \cite[Theorem 1.3.2]{Gra}.
\end{proof}

\begin{remark}
	In fact, if we interpolate between the weak $(r_0,1)$-estimate and the strong $(\infty,\infty)$-estimate, the strong-type estimate for $1<p\le \infty$ can be obtained. However, the reason that we split the argument into two parts is that not only the proof of the first part is more direct, but also the exploited energy structure is a very powerful tool and has its own interest.
\end{remark}

\subsection{A mixed trace estimation for $u^p\le(-\Delta)^\frac{\alpha}{2}u$}\label{s23} Such a type of estimate is given below.

\begin{lemma}\label{int-iqq} Let $(p,\alpha)\in(1,\infty)\times(0,2)$ and $\varphi$ be a smooth function in $\R^{N+1}_+$ with the compact support $\hbox{supp}(\varphi)$ such that $0\le\varphi\le 1$ and $\varphi=1$ in an nonempty open subset of $\hbox{supp}(\phi)$. If $u$ is a nonnegative weak solution to \eqref{eq:2} and $\omega$ is its $\alpha$-extension, then for any $0<t\ll 1$ and $s\gg 1$ there is a constant $C_{s,N}>0$ depending on $s$ and $N$ such that 
\begin{equation}
\label{int-iq1}
 \int_{\R^{N}} \varphi^s|_{y=0} \  u^{p-t} dx 
\le C_{s,N} \left(\iint_{\R^{N+1}_+} \omega^{p'-t'} \varphi^s\,y^{1-\alpha} dxdy \right)^{\frac{1-t}{p'-t'}}   \left( t^{-\frac{p'}{p'-1}}  \iint_{\R^{N+1}_+}  |\nabla \varphi |^{\frac{2p' - 2t'}{p' -1 + t - t'}}y^{1-\alpha} dxdy \right)^{\frac{p'-1 +t-t'}{p'-t'}},
\end{equation}
where 
\[
p' = \frac{(N + 2 -\alpha)p}{N} \quad \& \quad  t' = \frac{(N + 2 -\alpha)t}{N}.
\]
\end{lemma}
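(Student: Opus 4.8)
\textbf{Proof plan for Lemma \ref{int-iqq}.}

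The plan is to use the weak-solution inequality \eqref{weak-sol} with the test function $h=\varphi^s u^{-t}$ (suitably justified/regularized), and then bound the resulting right-hand side by a combination of H\"older's inequality and the weighted $L^{p'}$-type quantity coming from Lemma \ref{es-ext}. First I would plug $h=\varphi^s u^{-t}\ge 0$ into \eqref{weak-sol}; on the left this yields exactly $\int_{\R^N}\varphi^s|_{y=0}\,u^{p-t}\,dx$, since $u^p\cdot u^{-t}=u^{p-t}$. The right-hand side becomes $\iint_{\R^{N+1}_+}\nabla\omega\cdot\nabla(\varphi^s\omega^{-t})\,y^{1-\alpha}\,dxdy$, after writing the extension of $u^{-t}$ — here one should be a little careful, since the natural choice inside the upper half space is to test against a function built from $\omega$ itself rather than the extension of $u^{-t}$; the clean route is to observe that $\nabla\cdot(y^{1-\alpha}\nabla\omega)=0$ lets us integrate by parts freely, so testing with $h=\varphi^s\omega^{-t}$ on $\R^{N+1}_+$ reproduces the boundary term $\int\varphi^s u^{p-t}$ via \eqref{fra-diff-loc} exactly as in the proof of Lemma \ref{int-iq}.

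Next I would expand the gradient: $\nabla(\varphi^s\omega^{-t})=s\varphi^{s-1}\omega^{-t}\nabla\varphi - t\varphi^s\omega^{-t-1}\nabla\omega$, so that
\[
\iint \nabla\omega\cdot\nabla(\varphi^s\omega^{-t})\,y^{1-\alpha}
= s\iint \varphi^{s-1}\omega^{-t}\,\nabla\omega\cdot\nabla\varphi\,y^{1-\alpha}
- t\iint \varphi^s\omega^{-t-1}|\nabla\omega|^2\,y^{1-\alpha}.
\]
The second term is $\le 0$ (this is where nonnegativity of $u$, hence of $\omega$, and $t>0$ matter), so it can be dropped; moreover the genuine gain is that the full ``bad'' term $-t\iint\varphi^s\omega^{-t-1}|\nabla\omega|^2 y^{1-\alpha}$ should be \emph{kept} on the left as a nonnegative quantity absorbing part of the Cauchy--Schwarz estimate. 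Concretely, one estimates
\[
s\iint \varphi^{s-1}\omega^{-t}\,\nabla\omega\cdot\nabla\varphi\,y^{1-\alpha}
\le \tfrac{t}{2}\iint \varphi^s\omega^{-t-1}|\nabla\omega|^2\,y^{1-\alpha}
+ \tfrac{C s^2}{t}\iint \varphi^{s-2}\omega^{1-t}|\nabla\varphi|^2\,y^{1-\alpha}
\]
by the weighted Young inequality $ab\le \tfrac{t}{2}a^2+\tfrac{C}{t}b^2$ with $a=\varphi^{s/2}\omega^{-(t+1)/2}|\nabla\omega|$ and $b=\varphi^{(s-2)/2}\omega^{(1-t)/2}|\nabla\varphi|$, and then the $\tfrac{t}{2}$-term is absorbed. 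This yields the intermediate bound
\[
\int_{\R^N}\varphi^s|_{y=0}\,u^{p-t}\,dx \;\le\; \frac{Cs^2}{t}\iint_{\R^{N+1}_+} \varphi^{s-2}\,\omega^{1-t}\,|\nabla\varphi|^2\,y^{1-\alpha}\,dxdy .
\]

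Finally I would split the right-hand side by H\"older's inequality with the exponents chosen so that the $\omega$-power becomes $\omega^{p'-t'}$ and the $\varphi$/$\nabla\varphi$-part collects to the stated exponent. Writing the integrand as $(\varphi^{?}\omega^{1-t})\cdot(\varphi^{?}|\nabla\varphi|^2)$ and applying H\"older with conjugate exponents $\big(\tfrac{p'-t'}{1-t},\tfrac{p'-t'}{p'-1+t-t'}\big)$ — note $\tfrac{1-t}{p'-t'}+\tfrac{p'-1+t-t'}{p'-t'}=1$ — produces precisely
\[
\Big(\iint \omega^{p'-t'}\varphi^{s}\,y^{1-\alpha}\Big)^{\frac{1-t}{p'-t'}}
\Big(\iint \varphi^{s'}|\nabla\varphi|^{\frac{2(p'-t')}{p'-1+t-t'}}\,y^{1-\alpha}\Big)^{\frac{p'-1+t-t'}{p'-t'}},
\]
and since $\varphi\le 1$ and one checks the exponent of $\varphi$ in the second factor stays $\ge 0$ for $s$ large, that factor is dominated by $\iint |\nabla\varphi|^{\frac{2p'-2t'}{p'-1+t-t'}}y^{1-\alpha}$, with the prefactor $Cs^2/t$ absorbed into the constant $C_{s,N}$ except for the explicit $t^{-p'/(p'-1)}$ dependence that emerges after tracking how the $1/t$ from Young's inequality combines with the H\"older exponent $\tfrac{p'-1+t-t'}{p'-t'}\to \tfrac{p'-1}{p'}$ as $t\to 0^+$. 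The main obstacle I anticipate is twofold: (a) rigorously justifying the choice $h=\varphi^s\omega^{-t}$ as an admissible test function in Definition \ref{d2} — since $\omega^{-t}$ blows up where $\omega=0$, one must first replace $\omega$ by $\omega+\delta$, run the argument, and let $\delta\to 0^+$ using monotone/dominated convergence, checking that $|y|^{1-\alpha}|\nabla(\varphi^s(\omega+\delta)^{-t})|\in L^2$; and (b) bookkeeping the exact powers of $t$ and $s$ so that the constant comes out exactly as $C_{s,N}\,t^{-p'/(p'-1)}$ with no hidden $t$-dependence — this is where the precise form of Young's inequality and the limiting behavior of the H\"older exponents must be matched carefully.
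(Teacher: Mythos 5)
Your proposal follows essentially the same route as the paper's proof: the paper also tests \eqref{weak-sol} with $\psi=\varphi^s(\omega+\delta)^{-t}$, keeps the term $t\iint\varphi^s\omega_\delta^{-1-t}|\nabla\omega|^2 y^{1-\alpha}$ on the left to absorb half of the Cauchy--Schwarz/Young estimate, applies H\"older with the conjugate pair $\bigl(\tfrac{p'-t'}{1-t},\tfrac{p'-t'}{p'-1+t-t'}\bigr)$, uses $\varphi\le 1$ to discard the leftover $\varphi$-power, and lets $\delta\to 0^+$. The bookkeeping of the $t$-power you flag is handled in the paper exactly as you anticipate, via $(s^2t^{-1})^{\frac{p'-t'}{p'-1+t-t'}}\le (s^2t^{-1})^{\frac{p'}{p'-1}}$, so your plan is correct and matches the paper's argument.
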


\begin{proof} Since $u$ is nonnegative, so is $\omega$. For constructing a proper test function, we introduce a small number
$$
0<\delta\ll 1,
$$
and let 
\begin{equation}
\label{test}
\omega_\delta = \omega + \delta\quad\&\quad \psi(x,y) = \varphi(x,y)^s \omega_\delta(x,y)^{-t},
\end{equation}
where $ 0<t\ll 1$, $ s\gg 1$. A simple calculation derives
\begin{equation}
\label{test1}
\nabla \psi = -t \omega_\delta^{-1-t} \varphi^s \nabla \omega + s \omega_\delta^{-t} \varphi^{s-1} \nabla \varphi.
\end{equation}
Using the compactness of $\hbox{supp}(\varphi)$, we obtain $\psi \in L^{p+1}(\R^{N+1})$. And since $\omega_\delta$ is uniformly away from $0$,  $\omega^{-1}_\delta$ is uniformly bounded from above, 
whence getting that for the fixed $t>0$ and $s>1$ 
\[
|\nabla \psi(x,y)|^2 y^{1-\alpha} \, dxdy \in L^{2}(\R^{N+1}_+).
\]

The foregoing analysis tells us that such a $\psi$ can be chosen a legal test function $h$ for (\ref{weak-sol}). So, by \eqref{weak-sol} and \eqref{test1} we obtain
\begin{equation}
\label{test-1}
t\iint_{\R^{N+1}_+} \omega_\delta^{-1-t} \varphi^s |\nabla \omega|^2\,y^{1-\alpha} dxdy + \int_{\R^{N}} \varphi^s|_{y=0} (u+\delta)^{p-t}\, dx \le s\iint_{\R^{N+1}_+} \omega_\delta^{-t} \varphi^{s-1} (\nabla \omega\cdot \nabla\varphi)\,y^{1-\alpha}dxdy.
\end{equation}
By the H\"older inequality and then Young inequality, the right hand of \eqref{test-1} can be estimated as follows:
\begin{equation*}
\begin{split}
s\iint_{\R^{N+1}_+} & \omega_\delta^{-t} \varphi^{s-1} \nabla \omega \cdot\nabla \varphi\, y^{1-\alpha}dxdy \\
& = \iint_{\R^{N+1}_+} (t \omega_\delta^{-1-t} \varphi^s)^{\frac{ 1}{2}}  (\nabla \omega\cdot \nabla \varphi)  s t^{-1/2} \omega^{\frac{1-t}{2}} \varphi^{\frac{s}2 -1}\, y^{1-\alpha}dxdy \\
& \le \left( t \iint_{\R^{N+1}_+}\omega_\delta^{-1-t} \varphi^s |\nabla \omega|^2\,y^{1-\alpha} dxdy\right)^{1/2}  \left( s^2 t^{-1} \iint_{\R^{N+1}_+}\omega_\delta^{1-t} \varphi^{s-2} |\nabla \varphi |^2\, y^{1-\alpha} dxdy\right)^{1/2}  \\
& \le \frac{ t}{2}\iint_{\R^{N+1}_+} \omega_\delta^{-1-t} \varphi^s |\nabla \omega|^2\,y^{1-\alpha} dxdy + \frac{s^2 t^{-1}}{2}\iint_{\R^{N+1}_+} \omega_\delta^{1-t} \varphi^{s-2} |\nabla \varphi |^2\, y^{1-\alpha} dxdy 
\end{split}
\end{equation*}
Bringing this last inequality into \eqref{test-1} we obtain
\begin{equation}
\label{test-1'}
\frac{t}2\iint_{\R^{N+1}_+} \omega_\delta^{-1-t} \varphi^s |\nabla \omega|^2\, y^{1-\alpha}dxdy + \int_{\R^{N}} \varphi^s|_{y=0} (u+\delta)^{p-t}\, dx \le \frac{s^2 t^{-1}}{2} \iint_{\R^{N+1}_+} \omega_\delta^{1-t} \varphi^{s-2} |\nabla \varphi |^2\, y^{1-\alpha} dxdy.
\end{equation}
By the H\"older inequality, the right side of \eqref{test-1'} can be estimated as follows:
\begin{equation}\label{test-12}
\begin{split}
2^{-1}s^2 & t^{-1} \iint_{\R^{N+1}_+} \omega_\delta^{1-t} \varphi^{s-2} |\nabla \varphi |^2\,y^{1-\alpha} dxdy \\
& =  \iint_{\R^{N+1}_+} \omega_\delta^{1-t} \varphi^{\frac{1-t}{p' -t'}s} \Big(2^{-1}s^2 t^{-1}  \varphi^{s- 2-  \frac{1-t}{ p' -t'}s } |\nabla \varphi |^2\Big)\, y^{1-\alpha} dxdy \\
& \le  \| \omega_\delta^{1-t} \varphi^{\frac{1-t}{p' -t'}s} \|_{L^{p_1}(\R^{N+1}_+, y^{1-\alpha}dxdy)} \|2^{-1}s^2 t^{-1}  \varphi^{s- 2- \frac{1-t}{ p' -t'}s } |\nabla \varphi |^2\|_{L^{p_1'}(\R^{N+1}_+, y^{1-\alpha}dxdy)},
\end{split}
\end{equation}
where 
\[
p_1 = 
\]
Note that for $p>1$, we have
\[
p' \ge \frac{N+2-\alpha}{N}\ \ \Longrightarrow (s^2 t^{-1} \big)^{\frac{p' -t'}{p' -1 + t - t'}} \le (s^2 t^{-1} \big)^{\frac{p' }{p' -1}}.
\]
And for $s\gg 1$ and $0< t \ll 1$, we have
\[
\varphi^{s-2 -\frac{1-t}{ p' -t'}s } \le 1.
\]
So, combining \eqref{test-1'} and \eqref{test-12} we finally obtain
\begin{align*}
\int_{\R^{N}} \varphi^s|_{y=0} &(u+\delta)^{p-t}\, dx \\
&\lesssim {\left(\iint_{\R^{N+1}_+} \omega_\delta^{p' -t'} \varphi^s\, y^{1-\alpha}dxdy \right)^{\frac{1-t}{p'-t'}}}{\left(\iint_{\R^{N+1}_+}  t^{-\frac{p'}{p'-1}}  \iint_{\R^{N+1}_+}  |\nabla \varphi |^{\frac{2p' - 2t'}{p' -1 + t - t'}}\, y^{1-\alpha}dxdy \right)^{\frac{p'-1 +t-t'}{p'-t'}}}.
\end{align*}
Letting $\delta \rightarrow 0$, we obtain \eqref{int-iq1}, thereby completing the proof.
\end{proof}

\section{Verification of Theorem}\label{s3}

\begin{proof}[Proof of the case $N\le\alpha$ or $N>\alpha\ \&\ p\le N/(N-\alpha)$] Assume that $u$ is a nonnegative weak solution to \eqref{eq:2}. Then $u$ satisfies \eqref{weak-sol}. If $\omega$ is the $\alpha$-extension of $u$, then Lemma \ref{int-iqq} holds. 

In what follows, we estimate the second factor of right side of \eqref{int-iq1} by selecting a series of appropriate test functions. To be more explicit, for a large number $R>0$ let us consider the function
\begin{equation}
\label{test0}
\varphi(X) = 
\begin{cases}
1, & |X| < R,\\
\left| \frac{X}{R} \right|^{-t}, & |X| \ge R,
\end{cases}
\end{equation}
where 
$$
X = (x,y) \in \R^{N+1}_+\ \ \&\ \ |X| = \sqrt{|x|^2 +y^2}.
$$
Since $R$ is big enough, $t =({\ln R})^{-1}$ is sufficiently small. For any $n\in\mathbb Z_+$ define a cutoff function $\eta_n$ by
\begin{equation}
\eta_n(X) =
\begin{cases}
1, & 0\le |X| \le nR;\\
2-\frac{|X|}{nR}, & nR \le |X| \le 2nR;\\
0, &|X| \ge 2nR,
\end{cases}
\end{equation}
and then consider the function 
\[
\varphi_n(X) = \varphi(X)\eta_n(X)
\]
so that $\varphi_n(X)$ approaches to $\varphi(X)$ from below as $n\rightarrow \infty$. Notice that
\[
\nabla \varphi_n = \varphi \nabla \eta_n + \eta_n \nabla \varphi.
\]
Thus, for any $a\ge 2$ one has 
\begin{equation}
\label{es1}
|\nabla \varphi_n |^a \lesssim |\varphi \nabla \eta_n|^a +| \eta_n \nabla \varphi|^a.
\end{equation}

Now we are ready to estimate the left side of \eqref{int-iq1} by choosing $\varphi_n$ to be the test functions. More precisely, we need to deal with the integral 
\begin{equation*}
\label{es2}
I_n(a) : = \iint_{\R^{N+1}_+} |\nabla \varphi_n|^a y^{1-\alpha}\, dxdy.
\end{equation*}
From \eqref{es1} we have
\begin{equation}
\label{es3}
\begin{split}
I_n(a) \lesssim & \iint_{\R^{N+1}_+} \eta_n^a|\nabla \varphi|^a y^{1-\alpha}\, dxdy + \iint_{\R^{N+1}_+} \varphi^a |\nabla \eta_n|^a y^{1-\alpha}\, dxdy \\
\lesssim & \iint_{\R^{N+1}_+\backslash B_R} |\nabla \varphi|^a y^{1-\alpha} dxdy +  \iint_{(B_{2nR}\backslash B_{nR})\cap\R^{N+1}_+} \varphi^a |\nabla \eta_n|^a y^{1-\alpha}\, dxdy.
\end{split}
\end{equation}
For the second integral in \eqref{es3}, we use $|\nabla \eta_n| \le ({nR})^{-1}$ to gain
\begin{equation}
\label{es4}
\begin{split}
\iint_{\R^{N+1}_+} \varphi^a |\nabla \eta_n|^a y^{1-\alpha}\, dxdy  
\lesssim & (nR)^{-a} \int_{(B_{2nR}\backslash B_{nR})\cap\R^{N+1}_+} \varphi^a |X|^{1-\alpha}\, dX \\
\lesssim & {(nR)^{-a}} \left(\sup_{(B_{2nR}\backslash B_{nR})\cap\R^{N+1}_+} \varphi^a \right) (nR)^{1-\alpha} |B(0,2nR)| \\
\lesssim & (nR)^{-a}\left(\frac{nR}{R}\right)^{-at} (nR)^{1-\alpha} (2nR)^{N+1} \\
\approx & n^{N+2-\alpha -a -at} R^{N+2-\alpha -a}.
\end{split}
\end{equation}
For the first integral in \eqref{es3}, we employ $|\nabla \varphi| \le R^t t |X|^{-1-t}$ to obtain that
\begin{equation}
\label{es5}
\begin{split}
\iint_{\R^{N+1}_+\backslash B_{R}} |\nabla \varphi|^a y^{1-\alpha}\, dxdy  
\lesssim & \int_{\R^{N+1}\backslash B_{R}} |\nabla \varphi|^a |X|^{1-\alpha}\, dX \\
\approx & \int_{R/2}^\infty R^{at} t^a r^{-a-at} r^{N+1-\alpha} dr \\
\approx & t^a R^{ -a +N+2-\alpha}.
\end{split}
\end{equation}
provided $a+at > N+2 -\alpha $. Combining \eqref{es4}, \eqref{es5} with \eqref{es3} yields
\[
I_n(a) \lesssim n^{N+2 -\alpha -a -at} R^{N+2-\alpha -a} +  t^a R^{ -a +N+2-\alpha},
\]
and consequently, 
\begin{equation}
\label{es6}
I_n(a) \lesssim n^{-at} +  t^a\quad\hbox{under}\quad a \ge N+2 -\alpha.
\end{equation}
Here, it should be pointed out that $I_n(a)$ is uniformly bounded in $R$ and $n$.

Now, from \eqref{int-iq1} and \eqref{es6} we obtain a constant $C>0$ depending only on $s$ and $a = \frac{p' -t'}{p'-1}$ such that 
\[
 \int_{\R^{N}} \varphi_n^s u^{p-t}\, dx 
\le C (n^{-at} +  t^a)\left(\iint_{\R^{N+1}_+} \omega^{p'-t'} \varphi_n^s\, y^{1-\alpha} dxdy\right)^{\frac{1-t}{p'-t'}}.
\]
Letting $n \rightarrow \infty$, we further obtain
\begin{equation}
\label{es06}
 \int_{\R^{N}} \varphi^s u^{p-t}\, dx 
\lesssim t^a\left(\iint_{\R_+^{N+1}} \omega^{p'-t'} \varphi^s\,y^{1-\alpha} dxdy \right)^{\frac{1-t}{p'-t'}}.
\end{equation}
Since $u\in L^{p+1}(\R^N)$, we conclude that $\omega \in L^{\frac{(p+1)(N+2-\alpha)}{N}}(\R^{N+1}_+, \ d\mu)$ by Lemma \ref{es-ext}, and that the integral of right side of \eqref{es06} is uniformly bounded in $t$. By letting $t\rightarrow 0+$, we discover
\[
\int_{\R^{N}} \varphi^s u^{p}\, dx  = \lim_{t\rightarrow 0+} \int_{\R^{N}} \varphi^s u^{p-t}\, dx \le \lim_{t\rightarrow 0+} t^a\left(\iint_{\R_+^{N+1}} \omega^{p'-t'} \varphi^s\,y^{1-\alpha} dxdy \right)^{\frac{1-t}{p'-t'}} =0,
\]
whence reaching $u=0$ provided $a  +at > N+2 -\alpha $.

Note that
\[
a = \frac{2(p' -t')}{p' -1}.
\] 
So, choosing $t$ to be sufficiently small one gets
\[
a+ at = \frac{2p'}{p'-1} + 2t' -\frac{2tt'}{p' -1} > \frac{2p'}{p'-1}.
\]
Thus we should guarantee 
\[
\frac{2p'}{p'-1} \ge N+2 -\alpha,\ \hbox{i.e.},\ 
(N-\alpha)p' \le N+2-\alpha.
\]
Recall that 
$$
p' = \frac{p(N+2-\alpha)}{N}.
$$ 
Thus we should have
\begin{equation*}
\label{non}
\frac{p(N+2-\alpha)(N-\alpha)}{N} \le N+2-\alpha,\ \hbox{i.e.},\ 
\frac{p(N-\alpha)}{N} \le 1,
\end{equation*}
as required.
\end{proof}

\begin{proof}[Proof of the case $p>N/(N-\alpha)$]\label{s32} Trivially, $u=0$ is a solution to \eqref{eq:2}. But, a positive solution of \eqref{eq:2} will be provided below. 

To do so, for $0<s<N$ let $I_s$ be the Riesz potential determined by
\[
I_s f = (-\Delta)^{-\frac{s}{2}}f.
\]
Actually, one has
\[
I_s f(x) = \left(\frac{\Gamma(N/2 - s/2)}{\pi^{N/2} 2^s \Gamma(s/2)}\right)\int_{\R^N} \frac{f(y)}{|x-y|^{N-s}}\, dy\quad\forall\quad x\in\R^N.
\]
Of course, the following fact is well-known:

\begin{equation}
\label{FT}
\widehat{|x|^{-N+s}}=c_{N,s}|x|^{-s}\quad\&\quad\widehat{I_sf}(x)=c_{N,s}\left(\frac{\Gamma(N/2 - s/2)}{\pi^{N/2} 2^s \Gamma(s/2)}\right)|x|^{-s}\hat f (x)
\end{equation}
with $c_{N,s}$ being a constant.

So, it is easy to find that the Riesz potential is the inverse of the fractional Laplacian in the sense of:
\[
I_\alpha ((-\Delta)^{\frac{\alpha}{2}} f) =(-\Delta)^{\frac{\alpha}{2}} ( I_\alpha f)  =f.
\]
By setting $f = \delta_0$ (the Dirac point mass at the origin) in the last formula we get
\[
(-\Delta)^{\frac{\alpha}{2}} ( I_\alpha \delta_0) = \delta_0,
\]
whence seeing that
\begin{equation}
\label{fundan}
W_\alpha(x) = I_\alpha\delta_0(x)=\left(\frac{\Gamma(N/2 - s/2)}{\pi^{N/2} 2^s \Gamma(s/2)}\right){|x|^{\alpha-N}}
\end{equation}
is the fundamental solution of $(-\Delta)^{\frac{\alpha}{2}}u=0$.

The main idea of the remaining argument is to perturb the fundamental solution \eqref{fundan} properly to construct a positive solution to \eqref{eq:2} when $p<N/(N-\alpha)$. However, the fundamental solution is not a proper solution due to its singularity at the origin. So, in order to handle this issue, we need a smooth cutoff function $\rho(x)$ that is defined by
\[
\rho(x) = 
\begin{cases}
1, & |x| \le 1;\\
\text{smooth and radially decreasing}, & 1\le |x| \le 2;\\
0, & |x| \ge 2.\\
\end{cases}
\]
It is easy to see that the function
\[
u(x) = \rho*W_\alpha(x) = \left(\frac{\Gamma(N/2 - s/2)}{\pi^{N/2} 2^s \Gamma(s/2)}\right)
\int_{\R^N} \frac{\rho(y)}{|x-y|^{N-\alpha}}\, dy\quad\forall\quad x\in\R^n
\]
is well-defined when $0<\alpha <N$, and solves the $\alpha$-fractional Laplacian equation, i.e.,
\[
(-\Delta)^{\frac{\alpha}{2}} u(x) = \rho(x)\quad\forall\quad x\in\R^N.
\]
Thus, for $0<\delta<N-\alpha$ we perturb $\rho*W_\alpha(x)$ at infinity to get
\[
u_\delta (x):= \rho*\frac1{|\cdot|^{n-\alpha -\delta}}(x) = \int_{\R^N} \frac{\rho(y)}{|x-y|^{N-\alpha-\delta}} dy\quad\forall\quad x\in\R^N.
\]
We next claim that $u_\delta$ solves the inequality \eqref{eq:2} under $p> \frac{N}{N-\alpha}$ when $|x|$ is large. We observe
\[
u_\delta (x) = \int_{|y| \le 2} \frac{\rho(y)}{|x-y|^{N-\alpha-\delta}} dy \approx \frac1{|x|^{N-\alpha-\delta}}
\quad\forall\quad |x| \gg 1,
\] 
whence getting 
\begin{equation}
\label{nonl}
u^p_\delta (x) \approx \frac1{|x|^{Np-\alpha p-\delta p}}\quad \forall\quad |x| \gg 1.
\end{equation}
On the other hand, we use \eqref{FT} to produce a constant $c>0$ such that
\[
\hat u_\delta (\xi) = c |\xi|^{-\alpha-\delta} \hat \rho (\xi),
\]
and thus 
\[
\widehat{(-\Delta)^{\frac{\alpha}{2}} u_\delta} (\xi) = c  |\xi|^{-\delta} \hat \rho (\xi),
\]
which in turn implies
\[
(-\Delta)^{\frac{\alpha}{2}} u_\delta (x) = c \rho * \frac1{|\cdot|^{N-\delta}}(x).
\]
Similarly, we have
\begin{equation}
\label{lin}
(-\Delta)^{\frac{\alpha}{2}} u_\delta (x) \approx \frac1{|x|^{N-\delta}}  \quad \forall\quad |x| \gg 1.
\end{equation}
If $p> \frac{N}{N-\alpha}$, then there exists a sufficiently small constant $\delta$ such that
$(N -\alpha -\delta) p > N -\delta$.
This inequality, along with \eqref{nonl} and \eqref{lin}, implies  
\[
u^p_\delta (x) \le (-\Delta)^{\frac{\alpha}{2}} u_\delta (x)  \quad \forall\quad |x| \gg 1,
\] 
thereby verifying the claim.

Since we have showed that there is a positive $R$ such that $u_\delta$ solves \eqref{eq:2} with $p > \frac{N}{N-\alpha}$ when $|x|\ge R$, for the rest part $|x| \le R$ we need to exploit the scaling structure of the inequality. To be more precise, observe that both $u_\delta $ and $(-\Delta)^{\frac{\alpha}{2}} u_\delta$ are continues and positive, and decay radially. So, letting 
\[
m = \max \left\{\frac{\displaystyle\max_{|x| \le R} u^p_\delta(x)}{\displaystyle\min_{|x| \le R} (-\Delta)^{\frac{\alpha}{2}} u_\delta(x)}, 1 \right\},
\]
we utilize the previous construction to deduce that $u_\delta$ solves the inequality
\begin{equation*}
\label{scal}
u^p  \le m (-\Delta)^{\frac{\alpha}{2}} u\quad\hbox{in}\quad\R^N.
\end{equation*}
Finally, choosing $u = m^{\frac1{1-p}} u_\delta$ we obtain a positive solution to \eqref{eq:2} on $\R^N$, as required.
\end{proof}


\begin{thebibliography}{23}

\bibitem{BCPS} C. Br\"andle, E. Colorado, A. de Pablo and U. S\'anchez, A concave—convex elliptic problem involving the fractional Laplacian, {\it Proc. Roy. Soc. Edinburgh Sect. A} 143 (2013), 39--71.


\bibitem{BL} S. I. Boyarchenko and S. Z. Levendorskii, Perpetual American Options Under Lévy Processes, {\it SIAM J. Control Optim.} 40(2006), 1663–-1696

\bibitem{CT} X. Cabr\`ea and J. Tan, Positive solutions of nonlinear problems involving the square root of the Laplacian, {\it  Advances in Mathematics}
224(2010), 2052-–2093


\bibitem{CS} L. Caffarelli and L. Silvestre, An extension problem related to the fractional Laplacian, {\it Comm. in PDEs} 32(2007), 1245--1260.

\bibitem{CSS} L. Caffarelli, S. Salsa and L. Silvestre, 
Regularity estimates for the solution and the free boundary of the obstacle problem for the fractional Laplacian, {\it  Inventiones mathematicae} 171(2008), 425--461


\bibitem{CDP} G. Caristi, L. D'Ambrosio and E. Mitidieri, Liouville theorems for some nonlinear inequalities, {\it Proc. Steklov Inst. Math.} 260(2008), 90–-111.


\bibitem{CM} G. Caristi and E. Mitidieri, Liouville theorems for quasilinear elliptic inequalities, {\it Doklady Math.} 79(2009), 118–-124.

\bibitem{CT} R. Cont and P. Tankov, Financial modelling with Jump Processes,  {\it (Chapman \& Hall / CRC Press, 2003) }


\bibitem{GiS} B. Gidas and J. Spruck, Global and local behavior of positive solutions of nonlinear elliptic equations, {\it Comm. Pure Appl. Math.} 34(1981), 525--598.

\bibitem{Gra} L. Grafakos, {\it Classical Fourier Analysis}, {Grad. Texts in Math. (second edition)}, 249(2008), Springer, New York.

\bibitem{GS} A. Grigor'yan and Y. Sun,  On non-negative solutions of the inequality $\Delta u+u^\sigma\le 0$ on Riemannian manifolds, {\it Comm. Pure Appl. Math.} 67(2014), 1336--1352.

\bibitem{MP1} E. Mitidieri and S. I. Pohozaev, Absence of global positive solutions of quasilinear elliptic inequalities (Russian), {\it Dokl. Akad. Nauk.} 359(1998), 456-460.

\bibitem{MP2} E. Mitidieri and S. I. Pohozaev, Absence of positive solutions of quasilinear elliptic problems in $\mathbb R^N$, {\it Proc. Steklov Inst. Math.} 227(1999), 186-216.

\bibitem{MP3} E. Mitidieri and S. I. Pohozaev, A priori estimates and the absence of solutions of nonlinear partial differential equations and inequalities, {\it Proc. Steklov Inst. Math.} 234(2001), 1-362.


\bibitem{P1} S. I. Pohozaev, Essentially nonlinear capacities induced by  differential operators (Russian), {\it Dokl. Akad. Nauk.} 357(1997), 592-594.

\bibitem{P2} S. I. Pohozaev, Critical non-linearities in partial differential equations, {\it Milan J. Math.} 77(2009), 127-150.


\bibitem{SZ} J. Serrin and H. Zou, Cauchy-Liouville and universal boundedness theorems for quasilinear elliptic equations and inequalities, {\it Acta Math.} 189(2002), 79-142. 

\bibitem{S1} Y. Sun, Uniqueness result for non-negative solutions of semi-linear inequalities on Riemannian manifolds, {\it J. Math. Anal. Appl.} 419(2014), 643-661.

\bibitem{S2} Y. Sun, Uniqueness results for non-negative solutions of quasi-linear inequalities on Riemannian manifolds, {\it Preprint}, 2014.

\bibitem{S3} Y. Sun, On the uniqueness of nonegative solutions of differential inequalities with gradient terms on Riemannian manifolds, {\it Preprint}, 2014.

\bibitem{WX} Y. Wang and J. Xiao, A constructive approach to positive solutions of $\Delta_p u + f(u, \nabla u) \le 0$ on Riemannian manifolds, {\it Preprint}, 2014.

\bibitem{X} J. Xiao, A sharp Sobolev trace inequality for the fractional-order derivatives, {\it Bull. Sci. Math.}130(2006), 87-96.

\end{thebibliography}
\end{document}